\newtheorem{theorem}{Theorem}[section]
\newtheorem{lemma}[theorem]{Lemma}
\newtheorem{corollary}[theorem]{Corollary}
\newtheorem*{theorem*}{Theorem}
\theoremstyle{definition}
\newtheorem{definition}[theorem]{Definition}
\newtheorem{observation}[theorem]{Observation}
\theoremstyle{remark}
\newtheorem{remark}[theorem]{Remark}
\numberwithin{equation}{section}
\newcommand{\R}{{\mathbb{R}}}
\newcommand{\Z}{{\mathbb{Z}}}
\newcommand{\vrepsilon}{\epsilon}
\renewcommand{\phi}{\varphi}
\renewcommand{\theta}{\vartheta}
\newcommand{\w}{\wedge}
\DeclareMathOperator{\id}{Id}
\DeclareMathOperator{\im}{Im}
\DeclareMathOperator{\lk}{lk}
\DeclareMathOperator{\OB}{OB}
\begin{document}
\title{A Reeb flow on the three-sphere without a disk-like global surface of section}

\author[Otto van Koert]{Otto van Koert}

\address{
Department of Mathematical Sciences and Research Institute of Mathematics, Seoul National University\\
Building 27, room 402\\
San 56-1, Sillim-dong, Gwanak-gu, Seoul, South Korea\\
Postal code 08826 
}
\email{okoert@snu.ac.kr}


\keywords{Global surfaces of section, Reeb flows, Integrable systems}

\begin{abstract}
We show that there are Reeb flows on the standard, tight three-sphere that do not admit global surfaces of section with one boundary component. In particular, the Reeb flows that we construct do not admit disk-like global surfaces of section.
These Reeb flows are constructed using integrable systems, and a connected sum construction that extends the integrable system.
\end{abstract}

\maketitle
\section{Introduction}
The purpose of this note is to show the existence of a Reeb flow on the standard, tight three-sphere that does not admit a global disk-like surface of section.
The motivation for proving such a statement comes from a famous theorem of Hofer, Wysocki and Zehnder, \cite{HWZ:GSS}, which asserts that a Reeb flow on a dynamically convex, tight three-sphere admits a global disk-like surface of section.
The Reeb condition is clearly necessary since there are flows on the three-sphere without any periodic orbits; for example, Kuperberg, \cite{K}, has constructed such flows.
The work of Hofer, Wysocki and Zehnder has been generalized in many ways by different people; we mention here Hryniewicz and Salom\~ao, \cite{H:GSS,HS:GSS}.

Given a global surface of section for a Reeb flow, the resulting return map is conjugated to an area-preserving diffeomorphism. This makes disk-like global surfaces of section particularly attractive, since a lot is known about area-preserving diffeomorphisms of an open disk. We mention the results of Franks proved in~\cite{F}, which imply that an area-preserving diffeomorphism of the disk has either one periodic point or infinitely many.
The work of Hryniewicz and Salom\~ao, \cite{HS:GSS}, shows that one can weaken the condition of dynamical convexity by imposing linking conditions, so one may wonder whether a disk-like global surface of section always exists for the tight three-sphere. We show that this is not the case.
\begin{theorem}
There is a Reeb flow on $S^3$ equipped with its standard, tight contact structure, that does not admit a global surface of section with only one boundary component. In particular, this Reeb flow does not admit a disk-like global surface of section.
\end{theorem}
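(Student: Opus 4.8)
The plan is to reduce the nonexistence of a one-boundary global surface of section to a linking-number constraint on the periodic orbits, and then to manufacture, from an integrable model and a connected sum, a Reeb flow whose periodic orbits cannot meet that constraint.

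The elementary observation I would start from concerns a necessary condition on a binding. Suppose $\Sigma$ is a global surface of section with a single boundary component, whose binding is the simple periodic orbit $P$. Then $\Sigma$ is a Seifert surface for $P$, its interior is transverse to the Reeb flow, and every other periodic orbit $\gamma$ must cross the interior of $\Sigma$, always with the same sign because the flow is positively transverse. Hence the algebraic and geometric intersection numbers of $\gamma$ with $\Sigma$ coincide and $\lk(P,\gamma)\ge 1>0$. In particular, if some periodic orbit $\gamma$ has $\lk(P,\gamma)\le 0$, then $P$ cannot be the binding of any global surface of section with one boundary component. So it suffices to build a Reeb flow for the standard tight contact structure on $S^3$ in which every periodic orbit $P$ admits a partner orbit $\gamma$ with $\lk(P,\gamma)=0$.

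To produce such a flow I would use an integrable Reeb flow as a building block and glue two copies along a contact-type sphere. Concretely, take two copies of $S^3$ carrying integrable Reeb flows whose periodic-orbit sets are completely understood from the integrable structure (invariant tori together with finitely many exceptional fibers), and perform a contact connected sum in a region carrying no periodic orbits, extending the integrable system across the neck. Since the connected sum of the standard tight sphere with itself is again the standard tight sphere, the result is a Reeb flow for $(S^3,\xi_{\mathrm{std}})$. The reason for doing the surgery away from the periodic orbits, and for using the integrable structure to enumerate them, is to guarantee that after gluing every periodic orbit is confined to one of the two summands, i.e. lies in one of two disjoint balls $B_+,B_-$ separated by the neck sphere, with at least one orbit on each side. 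Granting this confinement the theorem follows at once: a candidate binding $P$ lies in, say, $B_+$, while any periodic orbit $\gamma\subset B_-$ is separated from $P$ by an embedded sphere, so $\gamma$ bounds a surface disjoint from $P$ and $\lk(P,\gamma)=0$; by the observation above $P$ is not a binding. Hence no one-boundary global surface of section exists, and in particular no disk-like one.

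The hard part is the connected sum step, and I expect the main obstacle to be controlling the Reeb dynamics in the neck. One must write down an explicit contact form and Reeb field interpolating between the two integrable pieces, check that the result genuinely defines $\xi_{\mathrm{std}}$ on $S^3$, and — the crucial point — confirm that no periodic orbit runs through the neck, since such an orbit could link orbits on both sides and so defeat the linking argument. This is precisely where extending the integrable system across the gluing region pays off: it lets one read off the complete orbit structure of the glued flow and certify both the confinement and the existence of orbits on each side.
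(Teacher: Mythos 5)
Your opening reduction to the linking condition is exactly the paper's Lemma~\ref{lemma:positive_linking}, and the idea of a connected sum of integrable pieces is in the right spirit; but the step you yourself flag as crucial --- that the connected sum can be performed so that no periodic orbit lies in, or passes through, the neck --- is a genuine gap, and your proposed mechanism for certifying it works against you. In the standard Weinstein model for the contact connected sum the belt sphere carries a Lyapunov periodic orbit, and if you instead extend the integrable system across the neck, as you propose, you cannot avoid closed orbits there either: the neck $S^2\times[0,1]$ cannot consist entirely of regular Liouville tori (a connected $3$-manifold foliated by invariant $2$-tori has toral boundary, not spherical), so the integrable structure must have singular fibres inside the neck, and these carry periodic orbits. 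In the paper's model the situation is even more extreme: the monodromy is the identity on the tube, so \emph{every} tube point lies on a periodic orbit, and the neighbourhood $N(b)$ of the connecting binding orbit is foliated by invariant tori, many of which are filled with periodic orbits. Without confinement to two balls your separation argument does not get off the ground; worse, an orbit sitting in the neck (such as the connecting binding orbit, which is essentially a binding component of the glued open book) is a priori a perfectly good candidate binding, since nothing prevents it from linking every other orbit positively --- the paper's closing remark about the ``transit'' orbit $u_1\# u_2\# u_3$ makes precisely this point.

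The paper's resolution is not to expel the neck orbits but to organize them: it performs the connected sum \emph{through} binding orbits (so periodic orbits are deliberately cut and reconnected) and, crucially, uses \emph{two} connected sums of three cylinder-page open books, producing a planar open book with four binding components and a decomposition of $S^3$ into invariant sets $P_i$, tubes $T$, $T_2$, and neighbourhoods of the two connecting orbits. Lemma~\ref{lemma:connected_sum} then supplies, for each candidate binding $\delta$, a Seifert surface avoiding some other invariant set that contains periodic orbits: a page orbit fails to link the tube orbits of its own sum, and an orbit near the first connecting orbit fails to link the tube orbits of the second sum, contradicting Lemma~\ref{lemma:positive_linking} in every case. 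If you want to salvage your two-summand picture you would need a Reeb flow on the tight $S^3$ with all closed orbits confined to two disjoint balls, which is a substantially stronger (and unproven) statement than the theorem itself.
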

To prove the theorem, we construct a dynamical system using the so-called book-connected sum, a special case of the Murasugi sum.
In this book-connected sum, we take a connected sum along balls that intersect a periodic orbit, inducing the connected sum of these orbits.
We retain control on the behavior of the orbits by virtue of having many invariant sets and a planar global surface of section with four binding orbits.
The proof is completed using symmetry of the linking number and an elementary analysis of the pairs of orbits that can occur.
The question on whether there is a Reeb flow on the tight $S^3$ without any global surface of section is left open.

The book-connected sum also gives a method to construct completely integrable Reeb flows, in the sense of Arnold-Liouville, on a large class of contact three-manifolds.
The construction is very simple, and does not deserve the name ``theorem'', so we will denote it by
\begin{observation}
\label{obs:integrable}
Suppose that $W$ is an oriented surface with boundary, and assume that $\psi:W\to W$ is the time-$1$ flow of an autonomous Hamiltonian such that $\psi|_{\partial W}=\id$. Let $Y$ denote the contact open book $Y=\OB(W,\psi)$.
Then $\R \times Y$ is a completely integrable system in the sense of Arnold-Liouville\footnote{We allow the integrals to be dependent on a ``small'' subset to obtain more interesting topology. For example, there is always a ``3-atom A'' in $Y$ in the sense of Bolsinov-Fomenko's book \cite{BF}, section~3.5.}.
Furthermore, if $(W', \psi')$ is another such pair, then the contact connected sum $Y\# Y'$ is still completely integrable.
\end{observation}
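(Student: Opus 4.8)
The plan is to realize $\R\times Y$ as an Arnold--Liouville system by writing down two Poisson-commuting first integrals on the symplectization $(\R\times Y,\ \omega=d(e^t\alpha))$: the symplectization coordinate and the page Hamiltonian $h$ that generates $\psi=\phi_h^1$. First I would record that the Hamiltonian vector field of $H:=e^t$ is the Reeb field: a one-line computation gives $\iota_R\omega=-dH$, so $X_H=R$ and the flow of $H$ is the Reeb flow.

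Next I would promote $h$ to a first integral $F$ on all of $\R\times Y$. Since $h$ is autonomous it is invariant under its own flow, so $h\circ\psi=h$ and $h$ descends to the mapping-torus part of $Y$; because $\psi|_{\partial W}=\id$, each boundary circle of $W$ is a periodic $X_h$-orbit on which $h$ is constant, so $h$ extends smoothly across the binding to a function $F$ on $Y$ (in the standard Hopf-type model near a binding orbit $F$ has a Bott-nondegenerate extremum, which is the ``3-atom A'' of the footnote). The key point is to pick the compatible contact form so that $L_RF=0$: the mapping torus of $\phi_h^1$ carries the $1$-periodic flow of $\partial_s-X_h$, which preserves $h$ and is transverse to the pages, and one builds $\alpha$ invariant under this circle action with $R$ tangent to its orbits. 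Pulling $F$ back to $\R\times Y$ then gives $\{H,F\}_\omega=dF(X_H)=dF(R)=L_RF=0$, while $dH=e^t\,dt$ and $dF=dh$ are independent wherever $dh\neq 0$, i.e. off the critical circles of $h$ (the binding orbits and the finitely many singular interior orbits), a closed subset with empty interior, as permitted by the footnote. The generic regular joint level sets are the products (level circle in $W$)$\times$(Reeb circle), hence Lagrangian $2$-tori, so $(H,F)$ is an Arnold--Liouville system.

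For the connected sum I would check that the book connected sum presents $Y\# Y'$ as an open book $\OB(W'',\psi'')$ whose data again satisfy the hypotheses, so that the first part applies verbatim. Here $W''=W\,\natural\,W'$ is the boundary connected sum of pages along an arc in the binding components meeting the connect-sum balls, and $\psi''$ equals $\psi$ on the $W$-part and $\psi'$ on the $W'$-part. After adding a constant to $h'$ so that $h$ and $h'$ take a common value on the two binding components being merged, I would assemble a smooth autonomous $h''$ on $W''$ restricting to $h$ and $h'$ away from the connect-sum region and equal to the standard nondegenerate model near the merged binding; its time-$1$ flow $\psi'':=\phi^1_{h''}$ fixes $\partial W''$ and presents $Y\#Y'$ as $\OB(W'',\psi'')$. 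The first part then yields complete integrability of $\R\times(Y\#Y')$.

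The main obstacle is the construction underlying $L_RF=0$ together with smoothness across the binding: one must choose the adapted contact form compatibly with the autonomous symmetry and verify that, near each binding orbit, the Hamiltonian and the contact form fit the standard Hopf-type model, so that $F$ is a genuine smooth Bott integral rather than merely continuous. For the second part the only delicate point is matching the additive normalizations and the rotation speeds of $h$ and $h'$ along the merged binding, so that $\psi''$ is again an autonomous time-$1$ flow equal to the identity on $\partial W''$; everything else is a routine interpolation.
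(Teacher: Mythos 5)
Your proposal is correct and follows essentially the same route as the paper: the two commuting integrals are the symplectization coordinate and the page Hamiltonian extended over the binding (the paper writes the extension explicitly as $h_2(r)+H(w)-h_2(1)$ using the profile function, which is exactly your ``Hopf-type model'' giving the Bott extremum), and the connected sum is handled by the book-connected sum after shifting $h'$ by a constant so the Hamiltonians patch to a single autonomous one. The only slight over-claim is that the degeneracy locus is a union of circles: since $\psi$ (and likewise $h_2$) must be constant on a neighborhood of the boundary/binding, the differentials are dependent on an open collar, which the paper absorbs into its footnote and its remark that the construction only yields $C^\infty$-integrals with degeneracies.
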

The projection to the $\R$-coordinate of the symplectization and the Hamiltonian generating the symplectomorphism $\psi$ give a pair of Hamiltonians that are in involution, and the connected sum can be performed in such a way that the Hamiltonians generating $\psi$ and $\psi'$ patch together to an autonomous Hamiltonian as we will see in Lemma~\ref{lemma:connected_sum}. Explicit formulas are given in Section~\ref{sec:integrable}.
The assumption that the flow is generated by an autonomous Hamiltonian is restrictive, but it does allow $\psi$ to be a product of Dehn twists along disjoint, separating curves.

In section 2 we collect standard facts about open books, surfaces of section and linking numbers, together with a lemma involving a connected sum operation.
The proof of the theorem is in section 3, and the appendix contains details concerning connected sums of open books.

\subsection*{Acknowledgements}
I thank Pedro Salom\~ao for helpful comments.
I was supported by NRF grant NRF-2016R1C1B2007662, which was funded by the Korean Government.
 
\section{Setup}
In the following, we will assume that $Y^3$ is a $3$-manifold with a non-vanishing vector field $X$.

\begin{definition}
\label{def:gss}
A {\bf global surface of section} for a flow $\phi_t$, generated by X, is an embedded, connected surface $S \subset Y^3$ with the following properties.
\begin{description}
 \item[(i)] The boundary of the surface consists of periodic orbits of $X$ (or an invariant set in higher dimensions).
 \item[(ii)] The vector field $X$ is transverse to the interior $\mathring{S}$ of the surface; we can and will assume that $S$ is oriented such that $X$ is positively transverse to $S$. 
 \item[(iii)] For every $x \in Y^3 \setminus \partial S$ there exists $t_+>0$ and $t_-<0$ such that $\phi_{t_+}(x) \in \mathring{S}$ and $\phi_{t_-}(x) \in \mathring{S}$.
\end{description}
\end{definition}
The main point of a global surface of section is that it can be used to discretize the dynamics by defining the global return map $\tau:\mathring{S}\to \mathring{S}$ by sending $x\mapsto \phi_{t_+(x)}(x)$, where $t_+(x)>0$ is the minimal positive return time.

We will use the linking number of knots in $S^3$ as an obstruction to a global surface of section, so let us briefly collect some properties of the linking number in $Y=S^3$.
Given knots $\gamma$ and $\delta$, choose a Seifert surface $S_\gamma$ for $\gamma$. Then define the linking number $\lk(\gamma,\delta)$ as the algebraic intersection number $S_\gamma \cdot \delta$.
This is independent of the chosen Seifert surface.
An immediate corollary of this definition is the following lemma.
\begin{lemma}
\label{lemma:positive_linking}
If a periodic orbit $\delta$ of the flow $\phi_t$ bounds a global surface of section, then $\delta$ links positively with every other periodic orbit $\gamma$ of $\phi_t$, i.e.~$\lk(\delta, \gamma)>0$.
\end{lemma}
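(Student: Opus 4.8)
The plan is to read ``$\delta$ bounds a global surface of section'' as the statement that $S$ is a global surface of section with a single boundary component $\partial S=\delta$, so that $S$ is in particular a Seifert surface for $\delta$. Once this is granted, the very definition of the linking number recorded above lets me compute $\lk(\delta,\gamma)$ directly as the algebraic intersection number $S\cdot\gamma$, and the whole content of the lemma reduces to showing that this number is strictly positive for every periodic orbit $\gamma\neq\delta$.

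First I would observe that $\gamma$ is disjoint from $\partial S=\delta$, since two distinct orbits of a non-vanishing vector field cannot meet. Hence every point of $\gamma$ lies in $Y^3\setminus\partial S$, and property~(iii) of Definition~\ref{def:gss} supplies, for any basepoint $x\in\gamma$, a time $t_+>0$ with $\phi_{t_+}(x)\in\mathring S$; thus $\gamma$ meets the interior of $S$ at least once. Because $\gamma$ is a compact orbit with $T\gamma=\R X$ and $X$ is transverse to $\mathring S$ by property~(ii), the intersection $\gamma\cap S$ is transverse, hence finite, and all intersection points lie in $\mathring S$. It then remains only to check that each of these crossings contributes $+1$: for then $S\cdot\gamma$ equals the number of crossings, which is at least one, giving $\lk(\delta,\gamma)=S\cdot\gamma\ge 1>0$.

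The step I expect to require the most care is exactly this sign bookkeeping. The sign of a crossing at $p\in\gamma\cap\mathring S$ compares the orientation $(T_pS,X)$ with the ambient orientation of $S^3$, and property~(ii) guarantees that $X$ is \emph{positively} transverse to $S$, so every crossing is $+1$ provided $S$ carries precisely the orientation that makes it a Seifert surface for $\delta$ equipped with its flow orientation. To see that these two orientations agree, I would pass to a standard local model near the binding: writing $\delta=\{r=0\}$ in coordinates $(r,\theta,z)$ with $\delta$ oriented by the flow, the surface $S$ appears as a page emanating from $\delta$, and positive transversality of $X$ to the pages forces the boundary orientation induced by $S$ on $\delta$ to coincide with the flow orientation. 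With this local check in hand, the global orientation of $S$ is the Seifert orientation, every crossing counts as $+1$, and the positivity $\lk(\delta,\gamma)>0$ follows.
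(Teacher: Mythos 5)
Your overall route is the same as the paper's: the paper gives no written proof beyond calling the lemma an immediate corollary of the Seifert--surface definition of $\lk$, and the intended argument is precisely your computation $\lk(\delta,\gamma)=S\cdot\gamma$ with $S$ the surface of section itself. Your first two paragraphs are correct and complete: distinctness of orbits puts $\gamma$ in $Y^3\setminus\partial S$, property (iii) of Definition~\ref{def:gss} gives at least one intersection with $\mathring S$, property (ii) makes every intersection transverse and of the same sign, and compactness of $\gamma$ gives finiteness. This already yields $\lvert \lk(\delta,\gamma)\rvert=\#(\gamma\cap\mathring S)\ge 1$, hence $\lk(\delta,\gamma)\neq 0$, which is in fact the only form in which the lemma is invoked in Section 3 (both applications derive a contradiction from $\lk=0$).

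The gap is in your last paragraph. The claim that positive transversality of $X$ to $S$ forces the boundary orientation induced on $\delta$ to agree with the flow orientation is not a consequence of Definition~\ref{def:gss}: it holds exactly when the flow winds \emph{positively} around $\delta$, and nothing in the definition pins down the winding direction, so the local model cannot decide the sign on its own. Concretely, on $S^3\subset\C^2$ consider the flow $\dot\theta_1=1$, $\dot\theta_2=-1$. The disk $D=\{\theta_1=0\}$ is a global surface of section with single boundary orbit $\delta=\{z_1=0\}$, which the flow orients by $-\partial_{\theta_2}$; here $D\cdot\gamma=+1$ for $\gamma=\{z_2=0\}$, yet $\lk(\delta,\gamma)=-1$, because the positively transverse orientation of $D$ induces $-\delta$ on the boundary. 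So both your orientation claim and the literal conclusion of the lemma fail for this (non-Reeb-type) flow. To close the gap you must either add the hypothesis that $X$ has positive angular component around $\delta$ --- which does hold for the flows constructed in the paper, since by \eqref{eq:Reeb_vf} the $\partial_\phi$-component of $R_B$ is $-h_1'(r)/(h_1h_2'-h_2h_1')\ge 0$ with $h_1$ decreasing, and $R_M=\partial_\phi$ on the mapping torus --- or weaken the conclusion to $\lk(\delta,\gamma)\neq 0$, which your argument does prove and which is all the paper needs.
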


\begin{lemma}
The linking number of two knots is symmetric, i.e.~$\lk(\gamma, \delta)=\lk(\delta, \gamma)$.
\end{lemma}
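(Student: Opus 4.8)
The plan is to prove the symmetry directly from the Seifert-surface definition of the linking number given just above, using the symmetry of intersection numbers rather than invoking any separate analytic formula. First I would choose Seifert surfaces $S_\gamma$ and $S_\delta$ for $\gamma$ and $\delta$; these exist because $S^3$ is a homology sphere, so every knot is null-homologous and bounds an oriented embedded surface. Since the definition of $\lk$ is independent of the chosen Seifert surface, I may perturb $S_\gamma$ and $S_\delta$ into general position, so that $S_\gamma \pitchfork S_\delta$, $S_\gamma \pitchfork \delta$, and $S_\delta \pitchfork \gamma$, while keeping $\gamma \cap \delta = \emptyset$. With these choices one has $\lk(\gamma,\delta) = S_\gamma \cdot \delta$ and $\lk(\delta,\gamma) = S_\delta \cdot \gamma$, and it remains to show that these two signed counts agree.

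The key step is to examine the intersection $C := S_\gamma \cap S_\delta$. By transversality this is a compact $1$-manifold, i.e.\ a disjoint union of arcs and circles, and its boundary can only occur where the curve $C$ runs into the boundary of one of the two surfaces. Using $\partial S_\gamma = \gamma$ and $\partial S_\delta = \delta$, together with $\gamma \cap \delta = \emptyset$, this gives
\[
\partial C = (\gamma \cap S_\delta) \sqcup (S_\gamma \cap \delta).
\]
Orienting $C$ as the transverse intersection of the two oriented surfaces, the induced boundary orientation assigns to each point of $\gamma \cap S_\delta$ the local intersection sign computing $S_\delta \cdot \gamma$, and to each point of $S_\gamma \cap \delta$ the sign computing $S_\gamma \cdot \delta$. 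Since the signed count of the boundary of a compact oriented $1$-manifold vanishes, these two contributions are equal up to the overall sign fixed by the conventions, yielding $\lk(\gamma,\delta) = \lk(\delta,\gamma)$.

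The only genuinely delicate point is the orientation bookkeeping in the last step: one must check that the boundary orientation on $C$ and the chosen intersection-number conventions are arranged so that $\sum \partial C = 0$ produces $S_\gamma \cdot \delta = S_\delta \cdot \gamma$ with a plus sign, rather than $S_\gamma \cdot \delta = -\,S_\delta \cdot \gamma$. I expect this to be the main obstacle, and I would pin it down by computing a single model case, namely the positive Hopf link, where $\lk = +1$ symmetrically; this fixes the global sign, after which the locality and uniformity of the orientation conventions across all intersection points finishes the argument. As an independent sanity check I would note that the Gauss linking integral furnishes a manifestly symmetric expression for $\lk$, confirming both the symmetry and the sign, though I would prefer the intersection-theoretic argument above since it stays entirely within the framework already set up in the paper.
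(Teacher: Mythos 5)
Your argument is correct, but note that the paper does not actually prove this lemma at all: it simply cites \cite[Corollary 4.5.3]{GS}, where the symmetry is obtained by pushing the two Seifert surfaces into the four-ball bounded by $S^3$ and appealing to the symmetry of the intersection pairing of surfaces in a $4$-manifold. You instead stay inside $S^3$ and run the three-dimensional version of the same cobordism argument: after putting $S_\gamma$ and $S_\delta$ in general position, the intersection $C=S_\gamma\cap S_\delta$ is a compact oriented $1$-manifold whose signed boundary count vanishes, and the two groups of boundary points compute $S_\delta\cdot\gamma$ and $S_\gamma\cdot\delta$. This is a standard, self-contained, and more elementary route; what the four-dimensional argument buys in exchange is that the sign bookkeeping is packaged once and for all into the symmetry of the intersection form, whereas in your version the induced boundary orientation on $C$ must be checked by hand. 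You correctly identify this as the only delicate point, and your plan to pin the global sign down on the positive Hopf link and then invoke locality of the orientation conventions is a legitimate way to finish; a marginally cleaner alternative is to observe directly that each arc of $C$ contributes endpoints of opposite induced orientation, so the total signed boundary is $S_\gamma\cdot\delta-S_\delta\cdot\gamma=0$ once the conventions are fixed consistently. Either way the proof is complete and could replace the citation.
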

See for example \cite[Corollary 4.5.3]{GS}.

\subsection{Giroux model for contact open books and global surfaces of section}
Global surfaces of section are closely related to open books, and Giroux' original construction of contact open books is particularly well-suited to our needs, because it gives a flow that is easy to understand.
We assume that we are given a Liouville domain $(W,d\lambda)$ and a symplectomorphism $\psi$ satisfying
\begin{enumerate}
\item $\psi^*\lambda=\lambda-dU$, where $U$ is negative.
\item in a neighborhood of the boundary $\psi$ is the identity, so $\psi|_{\nu_W(\partial W)}=\id$.
\end{enumerate}
\begin{remark}
Given any compactly supported symplectomorphism, meaning condition (2) is satisfied, one can always deform it to a symplectomorphism satisfying condition (1). See for example \cite[Lemma 2.13]{vK}.

In our application, condition (1) is automatically satisfied.
Indeed, we assume that $\psi$ is the time-$1$ flow of an autonomous Hamiltonian, so $\psi =Fl^{X_H}_1$.
We note that
\[
\frac{d}{dt} {Fl^{X_H}_t}^* \lambda ={Fl^{X_H}_t}^* {\mathcal L}_{X_H} \lambda = {Fl^{X_H}_t}^*(\iota_{X_H} d\lambda + d \iota_{X_H} \lambda)
={Fl^{X_H}_t}^*(-dH +d \iota_{X_H} \lambda).
\]
This is exact and $W$ is compact, so by integration in $t$ we can choose a primitive $U$ that is negative on $W$.
\end{remark}

Given the above setup, define the mapping torus
$$
M(W,\psi):=W\times \R /(x,\phi)\sim (\psi(x),\phi+U(x)\,)
$$
The form $\alpha_M:=d\phi +\lambda$ descends to a well-defined contact form.
We will consider the set $B(W):=\partial W\times D^2$, which we will refer to as a neighborhood of the binding.
The binding itself is then the set $\partial W \times \{ 0 \}$.
We will denote the projection onto the first factor by $p_\partial:B(W)\to \partial W$.
Equip the neighborhood of the binding with the contact form
\begin{equation}
\label{eq:contact_form_binding}
\alpha_B:=h_1(r)\lambda|_{\partial W}+h_2(r)d\phi
.
\end{equation}
Here $h_1$ and $h_2$ are profile functions from $[0,1)$ to $\R$ whose behavior is
indicated in Figure~\ref{fig:functions_binding}; near $r=0$, $h_1\sim 1-r^2$, and $h_2 \sim r^2$. 
For larger $r$, the function $h_1$ is assumed to have exponential drop-off, and $h_2$ should be constant.
\begin{figure}[htp]
\def\svgwidth{0.6\textwidth}%
\begingroup\endlinechar=-1
\resizebox{0.6\textwidth}{!}{%
\begingroup%
  \makeatletter%
  \providecommand\color[2][]{%
    \errmessage{(Inkscape) Color is used for the text in Inkscape, but the package 'color.sty' is not loaded}%
    \renewcommand\color[2][]{}%
  }%
  \providecommand\transparent[1]{%
    \errmessage{(Inkscape) Transparency is used (non-zero) for the text in Inkscape, but the package 'transparent.sty' is not loaded}%
    \renewcommand\transparent[1]{}%
  }%
  \providecommand\rotatebox[2]{#2}%
  \newcommand*\fsize{\dimexpr\f@size pt\relax}%
  \newcommand*\lineheight[1]{\fontsize{\fsize}{#1\fsize}\selectfont}%
  \ifx\svgwidth\undefined%
    \setlength{\unitlength}{364.47853088bp}%
    \ifx\svgscale\undefined%
      \relax%
    \else%
      \setlength{\unitlength}{\unitlength * \real{\svgscale}}%
    \fi%
  \else%
    \setlength{\unitlength}{\svgwidth}%
  \fi%
  \global\let\svgwidth\undefined%
  \global\let\svgscale\undefined%
  \makeatother%
  \begin{picture}(1,0.37244992)%
    \lineheight{1}%
    \setlength\tabcolsep{0pt}%
    \put(0,0){\includegraphics[width=\unitlength,page=1]{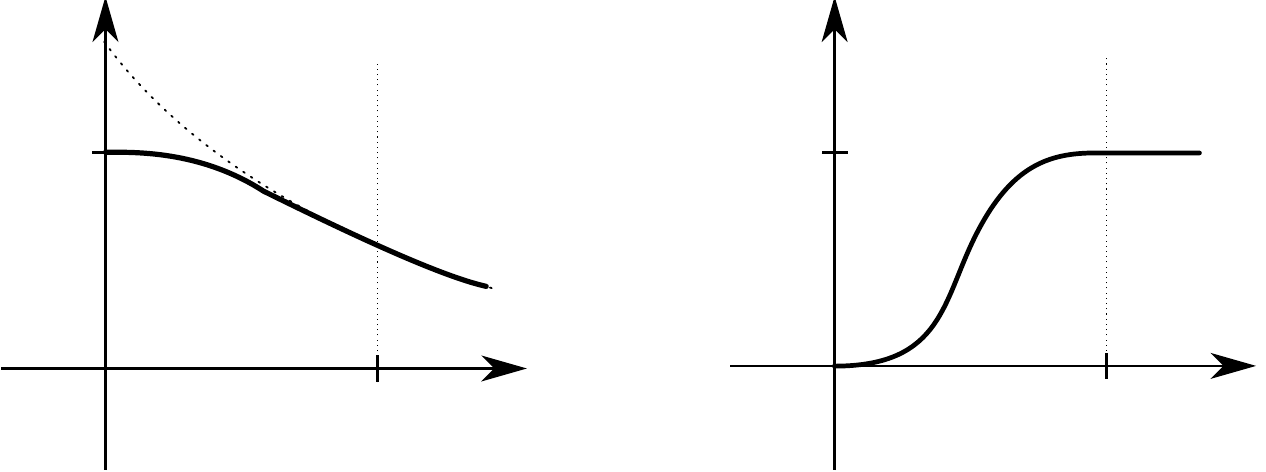}}%
    \put(0.01815833,0.32578182){\color[rgb]{0,0,0}\makebox(0,0)[lt]{\lineheight{0}\smash{\begin{tabular}[t]{l}$h_1$\end{tabular}}}}%
    \put(0.37064353,0.02670343){\color[rgb]{0,0,0}\makebox(0,0)[lt]{\lineheight{0}\smash{\begin{tabular}[t]{l}$r$\end{tabular}}}}%
    \put(0.94982294,0.02350451){\color[rgb]{0,0,0}\makebox(0,0)[lt]{\lineheight{0}\smash{\begin{tabular}[t]{l}$r$\end{tabular}}}}%
    \put(0.59532611,0.32781846){\color[rgb]{0,0,0}\makebox(0,0)[lt]{\lineheight{0}\smash{\begin{tabular}[t]{l}$h_2$\end{tabular}}}}%
  \end{picture}%
\endgroup%
}\endgroup
\caption{Functions for the contact form near the
    binding}\label{fig:functions_binding}
\end{figure}
We glue the contact manifolds together to obtain a contact open book
$$
\OB(W,\psi)= B(W) \coprod M(W,\psi) / \sim,
$$
where we identify collar neighborhoods of $B(W)$ and $M(W,\psi)$ using the map
\[
B(W)\setminus \partial W\times D^2(1/2) \longrightarrow M(W,\psi),
\quad (x; r,\phi ) \longrightarrow [Fl^{X}_{-r}(x), \phi]
\]
Here $X$ is the Liouville vector field on $W$. The above choice of $h_1$ and $h_2$ is used to show that the contact forms glue together to give a globally defined contact form.

On the complement of the binding we have a fiber bundle over the circle, given by
\[
\begin{split}
\theta:Y \setminus \partial W\times\{0\} &\longrightarrow S^1=\R/\Z \\
y & \longmapsto
\begin{cases}
\frac{\phi}{2 \pi} & \text{if }y=(w;r,\phi) \in \partial W\times D^2 \\
\frac{\phi}{-U} & y=[w,\phi] \in M(W,\psi)
\end{cases}
\end{split}
\]

The Reeb vector fields on the two models are given by
\begin{equation}
\label{eq:Reeb_vf}
\begin{split}
R_M&=\frac{\partial}{\partial \phi}
\text{ on } M(W,\psi)
\text{, and } \\
R_B&=\frac{1}{h_1(r)h_2'(r)-h_2(r)h_1'(r)}\left(
h_2'(r)R_{\lambda|_{\partial W}}
-h_1'(r)\frac{\partial}{\partial \phi}
\right)
\text{ on }
B(W).
\end{split}
\end{equation}

We conclude
\begin{lemma}
The binding is an invariant set under the Reeb flow, and each fiber of $\theta$ is a global hypersurface of section.
Furthermore, the set $B(W)$ is a disjoint union of $B\times \{ 0 \}$  and sets of the form $B\times S^1$, each of which are invariant under the Reeb flow.
\end{lemma}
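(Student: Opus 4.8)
The plan is to verify all three assertions by working separately in the two charts $M(W,\psi)$ and $B(W)$, reading off the required transversality and invariance from the explicit Reeb vector fields in \eqref{eq:Reeb_vf}, and checking that the two descriptions agree on the overlap of the collars.

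First I would establish the torus decomposition together with the invariance of the binding, since both are local statements on $B(W)=\partial W\times D^2$. The field $R_B$ is a combination of $R_{\lambda|_{\partial W}}$, which is tangent to the $\partial W$-factor, and $\partial_\phi$, which is tangent to the circles $\{r=\mathrm{const}\}$ in $D^2$; hence $dr(R_B)=0$ and every level set $\{r=c\}$ is invariant. Writing $B$ for a component of $\partial W$, the level $c=0$ is the binding circle $B\times\{0\}$ and each level $c>0$ is a torus $B\times S^1$, which gives the claimed disjoint decomposition. For the invariance of the binding itself one must take the limit $r\to 0$: using $h_1\sim 1-r^2$ and $h_2\sim r^2$ one finds $-h_1'\sim 2r$, $h_2'\sim 2r$, and $h_1h_2'-h_2h_1'\sim 2r$, so the coefficient of $R_{\lambda|_{\partial W}}$ tends to $1$ while the coefficient of $\partial_\phi$ stays bounded and $\partial_\phi$ itself vanishes as a vector at $r=0$. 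Thus $R_B$ extends smoothly across $r=0$ with $R_B|_{\{r=0\}}=R_{\lambda|_{\partial W}}$, which is tangent to the binding, so the binding is invariant.

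Next I would prove that the fibers of $\theta$ are global hypersurfaces of section by checking the three conditions of Definition~\ref{def:gss}. The boundary of each fiber lies on the binding, which is invariant by the previous step, giving (i); each fiber is a copy of the page $W$ and is connected. For transversality (ii) I would compute $d\theta(R)$: on $M(W,\psi)$ we have $d\theta(R_M)=\partial_\phi(\phi/(-U))=1/(-U)>0$ since $U<0$, and on $B(W)$ we have $d\theta(R_B)=\frac{1}{2\pi}\frac{-h_1'}{h_1h_2'-h_2h_1'}>0$ for $r>0$, using that $h_1$ is decreasing (so $-h_1'>0$) and that the contact condition forces $h_1h_2'-h_2h_1'>0$. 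For the return property (iii), note that $d\theta(R)$ extends continuously and positively across the binding — the computation above gives the limit $1/(2\pi)$ — so by compactness of $Y$ it is bounded below by a positive constant $c_1$ on the complement of the binding. Since a non-binding orbit never reaches the invariant binding, lifting $\theta$ to $\R$ shows $\theta(\phi_t(x))$ grows at least linearly with slope $\geq c_1$ in forward time and decreases in backward time, so every non-binding orbit crosses every fiber infinitely often. This yields (iii), and hence each fiber is a global hypersurface of section.

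I expect the main obstacle to be the behavior at the binding $r=0$, where the prefactor $1/(h_1h_2'-h_2h_1')$ blows up like $1/(2r)$ while both profile terms vanish to first order; the content of the first step is that this apparent singularity is removable and produces exactly the Reeb field $R_{\lambda|_{\partial W}}$ of the binding. A secondary point requiring care is the consistency of the two models on the overlap $\partial W\times(D^2\setminus D^2(1/2))$: one must confirm that the collar identification $(x;r,\phi)\mapsto[Fl^{X}_{-r}(x),\phi]$ matches the two formulas for the Reeb field and for $\theta$, which is precisely what the choice of profile functions $h_1,h_2$ (exponential drop-off of $h_1$, $h_2$ eventually constant) is designed to guarantee.
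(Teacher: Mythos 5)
Your proposal is correct and follows exactly the route the paper intends: the paper states this lemma as an immediate consequence ("We conclude") of the explicit Reeb vector field formulas in~\eqref{eq:Reeb_vf}, and your argument simply carries out that verification --- invariance of the $\{r=c\}$ levels from $dr(R_B)=0$, the removable singularity at $r=0$ giving $R_B|_{r=0}=R_{\lambda|_{\partial W}}$, and positivity of $d\theta(R)$ plus compactness for the global-return property. No gaps; the details you flag (behavior at $r=0$, consistency on the collar overlap) are exactly the ones the paper's choice of profile functions is designed to handle.
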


\subsection{Book-connected sum of contact open books}
\label{sec:abstract_book_sum}
We describe the abstract model for the book-connected sum in the contact setting. For a description in the smooth setting, see for example \cite{MNMS}, section 7 and 8.
Suppose that $\OB(W_i,\psi_i)$ are abstract contact open books for $i=1,2$.

Denote the $j$-th boundary component of the $i$-th page $W_i$ by $b_i^j$: this labeling is a choice that will play a role later on.
Take points $p_i$ in the first boundary component of each page, so $p_i\in b_i^1$.
Define the boundary connected sum $W_1\natural_{b_1^1,b_2^1} W_2$ by attaching a Weinstein $1$-handle to $W_1\coprod W_2$ along Darboux balls in $\partial W_1\coprod \partial W_2$ containing $p_1$ and $p_2$, respectively.
This is illustrated in Figure~\ref{fig:page_sum}.
Below we will write $W_1\natural W_2:=W_1\natural_{b_1^1,b_2^1} W_2$ to remove some clutter from the notation, but it is important to realize the dependence on the choice of the additional data, and we reintroduce this notational dependence when we need this dependence.
\begin{figure}[!htb]
\def\svgwidth{1.0\textwidth}%
\begingroup\endlinechar=-1
\resizebox{1.0\textwidth}{!}{%
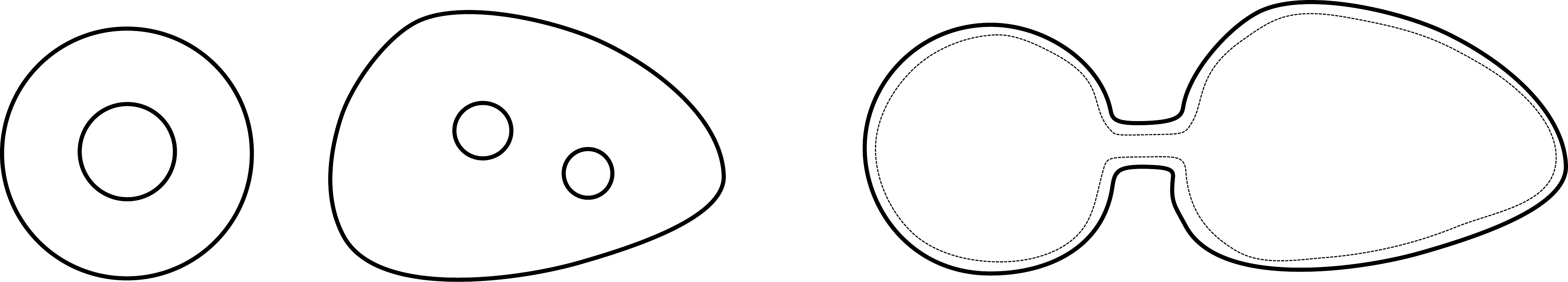%
}\endgroup
\caption{Boundary connected sum of pages}
\label{fig:page_sum}
\end{figure}
To be more precise about the attaching locus of the $1$-handle, denote the attaching locus in $W_i$ by $A_i$, and choose $A_i$ so small such that $\psi_i|_{A_i}=\id$.
We obtain embeddings
\[
\iota_j:W_j\longrightarrow W_1\natural W_2,
\]
which we use to extend the symplectomorphisms $\psi_1$ and $\psi_2$ to a symplectomorphism $\psi_1 \natural \psi_2$ of $W_1\natural W_2$.
Put
\[
\psi_1 \natural \psi_2=\begin{cases}
\iota_1\circ \psi_1(\iota_1^{-1}(x) ) & \text{if }x\in \im i_1 \\
\iota_2\circ \psi_2(\iota_2^{-1}(x) ) & \text{if }x\in \im i_2 \\
\id &\text{otherwise.}
\end{cases}
\]
The $1$-handle attachment induces a connected sum on the first binding component of $W_1$ and $W_2$.
We will denote the resulting binding component of $W_1\natural W_2$ by $b=b_1^1\#b_2^1$.
Choose a collar neighborhood $\nu_{W_1\natural W_2}(b)$ such that $\psi_1 \natural \psi_2$ restricts to the identity on this neighborhood.
Points in $W_1\natural W_2 \setminus \nu_{W_1\natural W_2}(b)$ that do not lie in the image of $\iota_1$ or $\iota_2$ will be called tube points.

This leads to the following distinguished sets:
\begin{enumerate}
\item A neighborhood of the connecting orbit,
$$
N(b)=
N(b_1^1\# b_2^1)=\{ x \in B(W_1\natural W_2) 
~|~p_\partial(x) \in b_1 \# b_2.
\}
.
$$
\item Tube orbits,
$$
T=\{ [x,\phi]\in M(W_1\natural W_2,\psi_1 \natural \psi_2)~|~x \text{ is a tube point} \}
.
$$
\item Page orbits, consisting of points that lie in the mapping torus part, minus the tube, or in a neighborhood of a binding orbit that is not $b=b_1^1\# b_2^1$. In a formula,
$$
P_j=\{ [x,\phi]\in M(W_1\natural W_2,\psi_1 \natural \psi_2)\}~|~x\in \im \iota_j \} \cup \bigcup_{b'\neq b, b'\subset Y_j} N(b').
$$
\end{enumerate}
These sets are invariant under the Reeb flow as we shall explain in Lemma~\ref{lemma:connected_sum}.
We will refer to $\OB(W_1\natural W_2,\psi_1 \natural \psi_2)$ as the abstract {\bf book-connected sum}. 
This model for the connected sum also has concrete model, meaning one for which open books are defined via a binding and a fiber bundle over the complement of the binding. 
In the concrete model, the connected sum is more obvious; simply take small balls centered at two points in a binding component and perform the usual connected sum. A local model shows that the open book extends. We won't make use of this description, though, and instead refer to the previously mentioned \cite{MNMS}.

\begin{lemma}
\label{lemma:connected_sum}
The contact open book $\OB(W_1\natural W_2,\psi_1 \natural \psi_2)$, equipped with the contact form coming from the abstract book-connected sum, is contactomorphic to the contact connected sum of the open books, $\OB(W_1,\psi_1)\# \OB(W_2,\psi_2)$.
Furthermore, for the Reeb flow of the abstract book-connected sum we have the following invariant sets
\begin{enumerate}
\item a neighborhood of the connecting orbit $b=b_1^1 \# b_2^1$, $N(b)$, as defined in Section~\ref{sec:abstract_book_sum}.
\item the set $T$ consisting of tube orbits.
\item the sets $P_1$ and $P_2$.
\end{enumerate}
If $\gamma$ is an orbit in $P_i$, then there is a Seifert surface $S_\gamma$ with $S_\gamma\subset P_i\cup N(b)$.
This Seifert surface can be chosen to intersect $N(b)$ in finitely many disks whose image under $p_\partial$ consist of finitely many points.
\end{lemma}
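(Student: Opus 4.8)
The plan is to establish the three assertions in turn: first the contactomorphism, then the invariant sets, and finally the Seifert surface, which is the substantive part.

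For the contactomorphism $\OB(W_1\natural W_2,\psi_1\natural\psi_2)\cong \OB(W_1,\psi_1)\#\OB(W_2,\psi_2)$, I would argue via a local model near the connecting region. Attaching the Weinstein $1$-handle along the Darboux balls $A_i\subset\partial W_i$ and extending the monodromy by the identity over the handle (as in the definition of $\psi_1\natural\psi_2$) alters the total space only in a neighborhood of the arc joining $b_1^1$ and $b_2^1$. The point is to check that this neighborhood is contactomorphic to the standard model for the contact connected sum performed along a ball meeting the binding. Since the monodromy is the identity on $A_i$ and on the collar $\nu_{W_1\natural W_2}(b)$, the contact form near $b$ is the glued binding model \eqref{eq:contact_form_binding}, which one matches with the standard neck; away from this neighborhood nothing changes, so the two contact manifolds agree. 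The verification of this local model is what I would defer to the appendix; compare \cite{MNMS}.

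For the invariant sets I would compute directly with the Reeb vector fields \eqref{eq:Reeb_vf}. On the mapping torus $R_M=\partial/\partial\phi$ preserves the page coordinate up to the monodromy, so any subset of $M(W_1\natural W_2,\psi_1\natural\psi_2)$ cut out by a $(\psi_1\natural\psi_2)$-invariant condition on the page is flow-invariant. Tube points satisfy $\psi_1\natural\psi_2=\id$, so the flow fixes the page coordinate and $T$ is invariant; the set $\im\iota_j$ is preserved by $\psi_1\natural\psi_2$, whence the mapping-torus part of $P_j$ is invariant. On the binding neighborhood $R_B$ is a combination of $R_{\lambda|_{\partial W}}$ and $\partial/\partial\phi$, and therefore preserves both $r$ and each connected component of $\partial(W_1\natural W_2)$; this shows that $N(b)$ and each $N(b')$ are invariant, and a short check of compatibility across the gluing of $M$ and $B$ finishes the claim.

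For the Seifert surface I would first observe that, away from the excised handle region, $P_i\cup N(b)$ is a copy of $\OB(W_i,\psi_i)$ in which the binding component $b_i^1$ has been replaced by $b$. I would build a Seifert surface $S_\gamma$ for $\gamma$ inside this summand $\OB(W_i,\psi_i)$, which exists since $\gamma$ is null-homologous there (as it is in the connected sum). Next I would put $S_\gamma$ into normal form near the binding orbit $b$: using the product structure $N(b)\cong b\times D^2$ and transversality of $S_\gamma$ to the core $b\times\{0\}$, isotope $S_\gamma$ so that $S_\gamma\cap N(b)$ consists of finitely many meridian disks $\{q_k\}\times D^2$, each mapping to the single point $q_k$ under $p_\partial$. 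Finally I would isotope $S_\gamma$ rel $\gamma$ to delete every intersection with the connected-sum ball except these essential meridian disks; since $\gamma$ is supported in $P_i$ while the tube $T$ and the other summand $P_{3-i}$ lie in the neck inside that ball, the result is a surface with $S_\gamma\subset P_i\cup N(b)$ that avoids both $T$ and $P_{3-i}$.

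The main obstacle is exactly this last containment step: one must reconcile the Seifert surface with the connected-sum decomposition, pushing it entirely into the summand $P_i\cup N(b)$ while simultaneously keeping its intersection with $N(b)$ in meridian-disk normal form. By comparison the invariant-set computation is a direct reading of \eqref{eq:Reeb_vf}, and the contactomorphism is a standard local-model argument that I would relegate to the appendix.
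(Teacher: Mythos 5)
Your proposal is correct and follows essentially the same route as the paper: the contactomorphism is deferred to a local-model/appendix argument, the invariant sets are read off directly from the Reeb vector fields in \eqref{eq:Reeb_vf}, and the Seifert surface is built inside the summand $Y_i$, normalized near the binding by transversality so that $p_\partial$ of its intersection with $N(b)$ is a finite set of points, and then arranged to miss the Darboux ball of the connected sum. The only cosmetic difference is that the paper insists the finitely many meridian points avoid the attaching ball from the outset rather than deleting excess intersections afterwards.
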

For intuition, the last statement says that $S_\gamma$ intersects $N(b)$ in ``flat'' disks.
\begin{proof}
The first assertion is a standard fact, which we reprove in the appendix. One ingredient of the construction in the appendix is that the connected sum (or equivalently, the region of $1$-handle attachment) is performed along Darboux balls that are contained in $N(b_1^1)$ and in $N(b_2^1)$, respectively. We will use this ingredient for the statement about the Seifert surfaces.

To verify the statement on invariant sets, we use the explicit model from equation~\eqref{eq:Reeb_vf}.
\begin{enumerate}
\item near the binding, the Reeb vector field is given by $R_B$; orbits have constant $r$-coordinate, and form therefore an invariant set.
Furthermore, these sets, which are all diffeomorphic to $\partial (W_1\natural W_2) \times S^1$, foliate $N(b)$.
\item on the set $T$, the monodromy $\psi_1 \natural \psi_2$ is defined to be the identity. The explicit form of $M(W_1\natural W_2,\psi_1\natural \psi_2)$ shows that the return map on $T$ is also the identity, so $T$ is an invariant set.
\item it follows that the complement of $N(b)$ and $T$ is also an invariant set, and this is the union $P_1\cup P_2$. 
This is a disjoint union as $P_1 \subset Y_1$ and $P_2 \subset Y_2$.
\end{enumerate}

Let us now consider the Seifert surfaces.
Take a periodic Reeb orbit $\gamma$ in $P_1$.
The latter set is disjoint from the Darboux balls used for the connected sum, so we may view $P_1\subset Y_1$.
Hence we regard $\gamma$ as an orbit in $Y_1$.
Choose a Seifert surface $S'_\gamma\subset Y_1$ capping $\gamma$. We isotope this surface such that it intersects the set $N(b_1)$ in the desired way, so that $p_\partial( S'_\gamma \cap N(b_1) )$ consists of only finitely many points, which can be done with a standard transversality argument involving a surface and a link.
Furthermore, we can arrange that these points do not lie in the Darboux ball $\iota_1(D^{3})$, yielding a Seifert surface $S_\gamma\subset Y$ satisfying the claim.
\end{proof}

\begin{remark}
We point out that the abstract contact open book changes the contact form in a neighborhood of all orbits that intersect the balls used for the connected sum because of Equation~\eqref{eq:contact_form_binding}.
Also, unlike the Weinstein model for the connected sum, the Reeb flow on the abstract book-connected sum does not have a non-degenerate Lyapunov orbit, but a degenerate family instead.
One can deform the contact form on the abstract contact open book or the return map as explained in Remark~\ref{rem:non-deg_lyapunov} to obtain a non-degenerate Lyapunov orbit in the separating sphere of the connected sum.
\end{remark}

\section{Proof of the theorem}
Lemma~\ref{lemma:connected_sum} implies the following corollary which will be used to prove the theorem.
\begin{corollary}
\label{cor:simple_linking}
Consider the setup of Section~\ref{sec:abstract_book_sum} with the same notation.
Suppose $\gamma_1$ and $\gamma_2$ are orbits in $P_1$ and $P_2$, respectively. Then
$$
\lk(\gamma_1,\gamma_2)=0.
$$
Furthermore, if $\gamma$ is an orbit in $P_1$ or in $P_2$ and $\tau$ is an orbit in $T$, then $\lk(\gamma,\tau)=0$.
\end{corollary}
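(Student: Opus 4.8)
The plan is to read off each linking number directly from its definition as the algebraic intersection number $S_\gamma \cdot \delta$ of a Seifert surface for one orbit with the other orbit, and then to invoke the confinement of Seifert surfaces supplied by Lemma~\ref{lemma:connected_sum} together with the pairwise disjointness of the invariant sets $P_1$, $P_2$, $T$ and $N(b)$. In short, each intersection will turn out to be \emph{geometrically} empty, so the count is zero for trivial reasons.

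For the first assertion I would start from the Seifert surface of $\gamma_1$. Since $\gamma_1$ is an orbit in $P_1$, Lemma~\ref{lemma:connected_sum} furnishes a Seifert surface $S_{\gamma_1}$ with $S_{\gamma_1}\subset P_1\cup N(b)$. The second orbit $\gamma_2$ lies in $P_2$. I would then note, from the definitions in Section~\ref{sec:abstract_book_sum}, that $P_2$ is disjoint from $P_1$ (as observed in the proof of Lemma~\ref{lemma:connected_sum}, since $P_1\subset Y_1$ and $P_2\subset Y_2$) and also disjoint from $N(b)$ (the neighborhood $N(b)$ is explicitly excluded from $P_2$, which contains only the neighborhoods $N(b')$ with $b'\neq b$). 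Hence $\gamma_2$ is disjoint from $P_1\cup N(b)$, so $S_{\gamma_1}\cap\gamma_2=\emptyset$ as sets; in particular $\partial S_{\gamma_1}=\gamma_1$ is disjoint from $\gamma_2$, and the algebraic intersection number $S_{\gamma_1}\cdot\gamma_2$ vanishes. By the definition of the linking number and its independence of the chosen Seifert surface, this yields $\lk(\gamma_1,\gamma_2)=0$.

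The second assertion follows by the same mechanism. Given $\gamma$ in $P_i$ (say $i=1$) and $\tau$ in $T$, I would again use the confined Seifert surface $S_\gamma\subset P_i\cup N(b)$. The tube set $T$ sits in the mapping-torus part and consists of points $[x,\phi]$ with $x$ a tube point, so it is disjoint from $P_1$ (whose mapping-torus points have $x\in\im\iota_1$) and from the binding neighborhood $N(b)$. Thus $\tau$ is disjoint from $P_i\cup N(b)$, the geometric intersection $S_\gamma\cap\tau$ is empty, and $\lk(\gamma,\tau)=0$.

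The argument is almost entirely bookkeeping: once Lemma~\ref{lemma:connected_sum} is granted, the only thing to verify is the pairwise disjointness of the relevant invariant sets, which is immediate from their definitions. Consequently I do not expect a genuine obstacle here; the real work has already been pushed into the construction of the confined ``flat-disk'' Seifert surface in Lemma~\ref{lemma:connected_sum}. The one point deserving a moment's care is the legitimacy of computing $\lk(\gamma_1,\gamma_2)$ from the surface capping $\gamma_1$ rather than one capping $\gamma_2$; this is guaranteed by the independence of the linking number on the Seifert surface and by its symmetry, both recorded above.
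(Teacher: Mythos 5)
Your proposal is correct and follows essentially the same route as the paper: it invokes the confined Seifert surface $S_\gamma\subset P_i\cup N(b)$ from Lemma~\ref{lemma:connected_sum} and observes that the other orbit lies in an invariant set disjoint from $P_i\cup N(b)$, so the intersection number vanishes. The extra bookkeeping you supply (explicit disjointness of the invariant sets, symmetry of the linking number) is consistent with what the paper leaves implicit.
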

Put differently, neither the $P$-orbits nor the $T$-orbits can be the boundary of a global surface which has only one boundary component.
\begin{proof}
To see that the corollary holds, assume that $\gamma\subset P_1$. Lemma~\ref{lemma:connected_sum} gives a Seifert surface $S_\gamma$ for $\gamma$ with $S_\gamma\subset P_1\cup N(b)$. It follows that $\lk(\gamma,\gamma_2)=S_\gamma\cdot \gamma_2=0$ for any periodic orbit $\gamma_2$ that is contained in $P_2$ or in $T$, since $S_\gamma$ is disjoint from these sets.
\end{proof}

We take now three open books, each representing the standard tight $S^3$, namely $Y_i=\OB(W_i,\psi_i)$, where each page is a cylinder, so $W_i\cong (I\times S^1, rd\phi)$, and each return map $\psi_i$ is a positive Dehn twist, i.e.~a map of the form $I\times S^1 \to I\times S^1,~(r,\phi) \mapsto (r,\phi+tw(r) )$, where $tw$ is a smooth function that decreases from $2\pi$ at the lower boundary to $0$ at the upper boundary.\footnote{A positive Dehn twist is often also called a right-handed Dehn twist. However, with the conventions in this paper, $r d\phi$ as Liouville form, the positive twist actually turns to the left.
Note that the argument here only involves the invariant sets and not the direction of the twists. However, negative Dehn twists will yield an overtwisted three-sphere.
}
We will denote the boundary components of the $i$-th cylinder $W_i$ by its upper component $u_i=\{1\}\times S^1$ and lower component $\ell_i=\{ -1 \}\times S^1$, respectively.

Form the book-connected sum
$$
Y=\OB(  (W_1 \natural_{u_1,u_2} W_2)\natural_{\ell_2,u_3} W_3, \psi_1\natural \psi_2 \natural \psi_3)
\cong 
\OB(  W_1 \natural_{u_1,u_2} (W_2 \natural_{\ell_2,u_3} W_3), \psi_1\natural \psi_2 \natural \psi_3)
.
$$
The page of the new open book is illustrated in Figure~\ref{fig:four_bindings}.
\begin{figure}[htp]
\def\svgwidth{0.5\textwidth}%
\begingroup\endlinechar=-1
\resizebox{0.5\textwidth}{!}{%
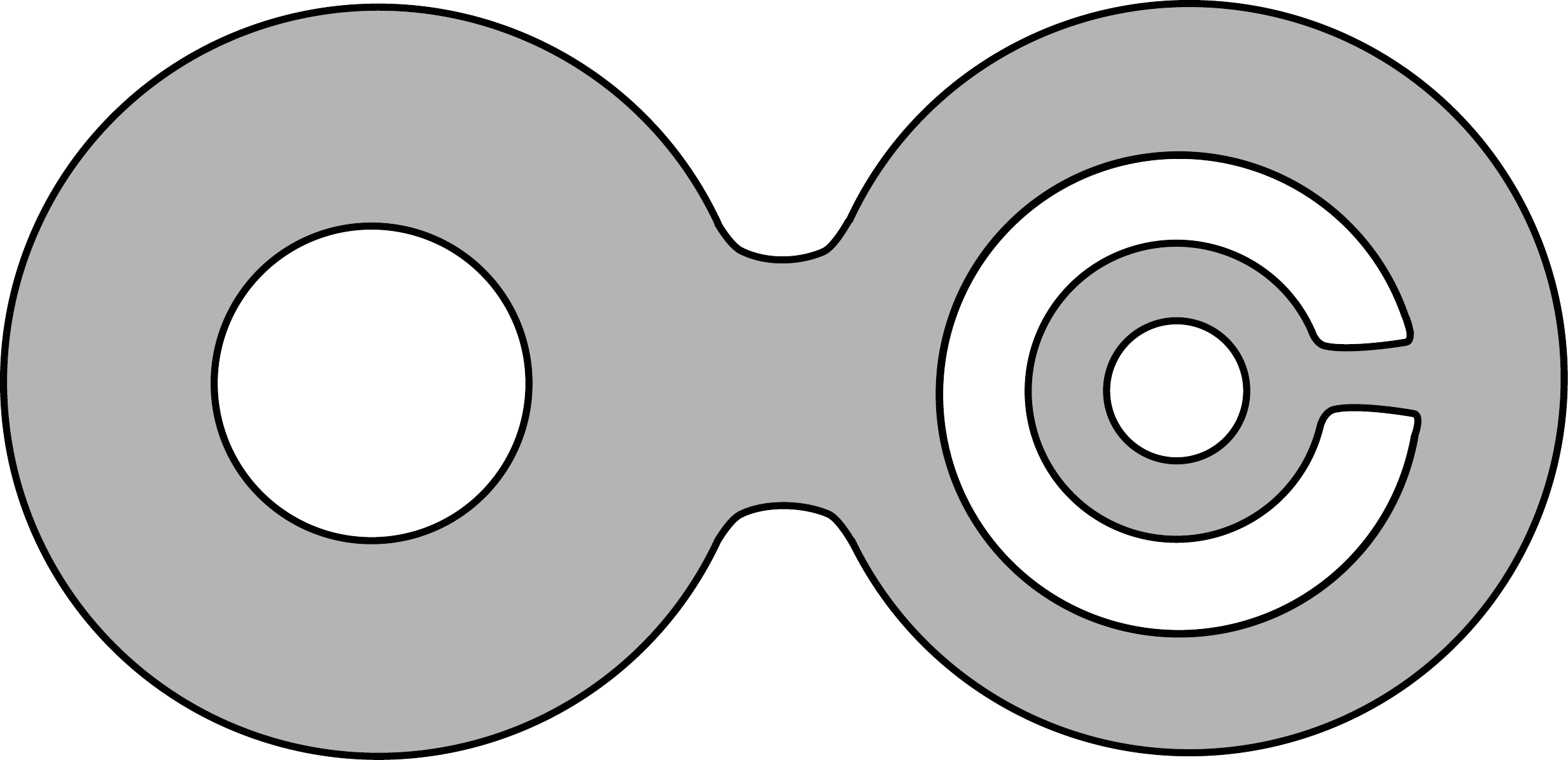%
}\endgroup
\caption{A page of an open book with four binding components}\label{fig:four_bindings}
\end{figure}
By the first statement of Lemma~\ref{lemma:connected_sum}, we see that $Y$ is contactomorphic to $(S^3,\xi_0)$ as $(S^3,\xi_0) \# (S^3,\xi_0)\cong (S^3,\xi_0)$.
We endow $Y$ with the Reeb flow from the construction from Section~\ref{sec:abstract_book_sum} and will argue by contradiction. Assume that $S$ is a global surface of section for the Reeb flow with only one boundary component, which we call $\delta$.

By Lemma~\ref{lemma:connected_sum} the contact manifold $Y$ is the disjoint union of invariant sets in the form of $P$-orbits, tube orbits $T$ and the neighborhoods of the two connecting orbits, $N(u_1 \# u_2)$ and $N(\ell_2 \# u_3)$.
Obviously, $\delta$ must be contained in one of these invariant sets.
Consider the set $P_1$, the page orbits in $W_1$ corresponding to the first connected sum. This set contains $\ell_1$, so it is not empty.
By Corollary~\ref{cor:simple_linking} applied to the first connected sum in $Y_1 \# (Y_2 \# Y_3)$ we see that $\lk(\ell_1, \delta)=0$ unless
\begin{enumerate}
\item $\delta \subset P_1$, or
\item $\delta \subset N(u_1 \# u_2)$.
\end{enumerate}
See Figure~\ref{fig:four_bindings_decomp}.
\begin{figure}[htp]
\def\svgwidth{0.5\textwidth}%
\begingroup\endlinechar=-1
\resizebox{0.5\textwidth}{!}{%
\begingroup%
  \makeatletter%
  \providecommand\color[2][]{%
    \errmessage{(Inkscape) Color is used for the text in Inkscape, but the package 'color.sty' is not loaded}%
    \renewcommand\color[2][]{}%
  }%
  \providecommand\transparent[1]{%
    \errmessage{(Inkscape) Transparency is used (non-zero) for the text in Inkscape, but the package 'transparent.sty' is not loaded}%
    \renewcommand\transparent[1]{}%
  }%
  \providecommand\rotatebox[2]{#2}%
  \newcommand*\fsize{\dimexpr\f@size pt\relax}%
  \newcommand*\lineheight[1]{\fontsize{\fsize}{#1\fsize}\selectfont}%
  \ifx\svgwidth\undefined%
    \setlength{\unitlength}{709.51666404bp}%
    \ifx\svgscale\undefined%
      \relax%
    \else%
      \setlength{\unitlength}{\unitlength * \real{\svgscale}}%
    \fi%
  \else%
    \setlength{\unitlength}{\svgwidth}%
  \fi%
  \global\let\svgwidth\undefined%
  \global\let\svgscale\undefined%
  \makeatother%
  \begin{picture}(1,0.48498303)%
    \lineheight{1}%
    \setlength\tabcolsep{0pt}%
    \put(0,0){\includegraphics[width=\unitlength,page=1]{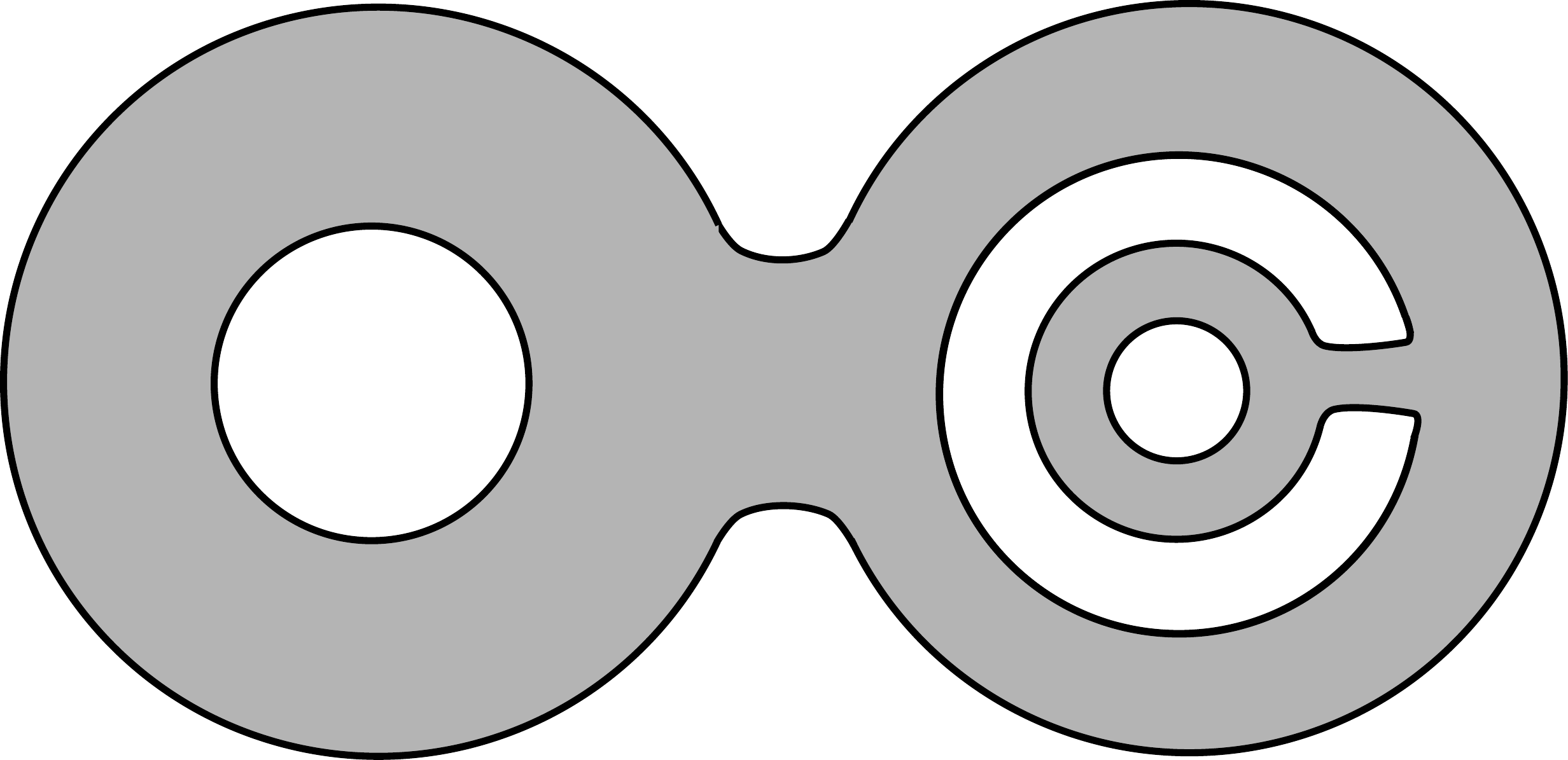}}%
    \put(0.37799748,0.44768856){\color[rgb]{0,0,0}\makebox(0,0)[lt]{\lineheight{0}\smash{\begin{tabular}[t]{l}$N(u_1\#u_2)$\end{tabular}}}}%
    \put(0.1978258,0.37529243){\color[rgb]{0,0,0}\makebox(0,0)[lt]{\lineheight{0}\smash{\begin{tabular}[t]{l}$P_1$\end{tabular}}}}%
    \put(0,0){\includegraphics[width=\unitlength,page=2]{four_bindings_decomp.pdf}}%
    \put(0.46342064,0.22316801){\color[rgb]{0,0,0}\makebox(0,0)[lt]{\lineheight{1.25}\smash{\begin{tabular}[t]{l}$T$\end{tabular}}}}%
    \put(0,0){\includegraphics[width=\unitlength,page=3]{four_bindings_decomp.pdf}}%
    \put(0.84548637,0.19665438){\color[rgb]{0,0,0}\makebox(0,0)[lt]{\lineheight{0}\smash{\begin{tabular}[t]{l}$T_2$\end{tabular}}}}%
    \put(0,0){\includegraphics[width=\unitlength,page=4]{four_bindings_decomp.pdf}}%
  \end{picture}%
\endgroup%
}\endgroup
\caption{The page and tube orbits}\label{fig:four_bindings_decomp}
\end{figure}
First assume $\delta \subset P_1$. Applying again Corollary~\ref{cor:simple_linking} we see that $\lk(\delta, \gamma_T)=0$ for any tube orbit $\gamma_T$ contradicting Lemma~\ref{lemma:positive_linking}.
Hence we assume $\delta \subset N(u_1 \# u_2)$.
Now apply Lemma~\ref{lemma:connected_sum} to the second connected sum in $(Y_1 \# Y_2) \# Y_3$. We conclude that there is a Seifert surface $S_\delta$ capping $\delta$ which is disjoint from orbits in the second tube. This implies
$$
\lk(\delta, \gamma_{T_2})=S_\delta \cdot \gamma_{T_{2}}=0
$$
for every periodic orbit in the second tube (which exist since the monodromy is the identity on the tube).
As the linking number is independent of the choice of Seifert surface, we get again a contradiction to Lemma~\ref{lemma:positive_linking}.

\begin{remark}
For the topology, the choice of bindings along which we performed the book-connected sum is irrelevant. It does matter for the dynamics. 
For example, the above argument fails for
$$
Y=\OB(  (W_1 \natural_{u_1,u_2} W_2)\natural_{u_1\# u_2,u_3} W_3, \psi_1\natural \psi_2 \natural \psi_3),
$$
because the long ``transit'' orbit $u_1\# u_2 \#u_3$ could bound a disk that links with all orbits.
\end{remark}

\subsection{Some comments on integrable systems}
\label{sec:integrable}
Let us first give explicit formulas for the integrals mentioned in the introduction.
Suppose that $Y=\OB(W,\psi)$, where $\psi$ is the time-$1$ flow of an autonomous Hamiltonian $H:W\to \R$.
Then $\R \times Y$ has two integrals that are in involution, namely
$$
H_1: \R \times Y \longrightarrow \R,\quad (t,y) \longmapsto t, \text{ and }
$$
\[
H_2: \R \times Y \longrightarrow \R,\quad (t,y) \longmapsto 
\begin{cases}
H(w) & y=[w,\phi] \in M(W,\psi) \\
h_2(r) +H(w)-h_2(1) & y=(w;r,\phi)\in B(W).
\end{cases}
\]
We note here that the condition $\psi|_{\partial W}=\id$ implies that $H|_{\partial W}$ is locally constant. However, $H|_{\partial W}$ does not need to be constant; its values can differ for the different boundary components.
These formulas prove the first assertion of Observation~\ref{obs:integrable}. For the second assertion, we use the abstract book-connected sum to perform the connected sum.

We note that this construction only gives $C^\infty$-integrals due to the cutoff function involved in the definitions of $h_2$ and $H$.

\begin{remark}
\label{rem:non-deg_lyapunov}
By the above, we can realize the flow of the proof of the main theorem as a completely integrable system. The way we described to do so yields a degenerate system. 
In this case the slope of the Liouville tori is constant across a large family of them.
In Figure~\ref{fig:non-degenerate} we have sketched level sets of a Hamiltonian generating a return map that is isotopic to the one of the above system and that is non-degenerate in the sense of Kolmogorov. The critical points on the singular level sets correspond to the Lyapunov orbits of the connected sum.
\end{remark}

\begin{figure}[htp]
\def\svgwidth{0.5\textwidth}%
\begingroup\endlinechar=-1
\resizebox{0.5\textwidth}{!}{%
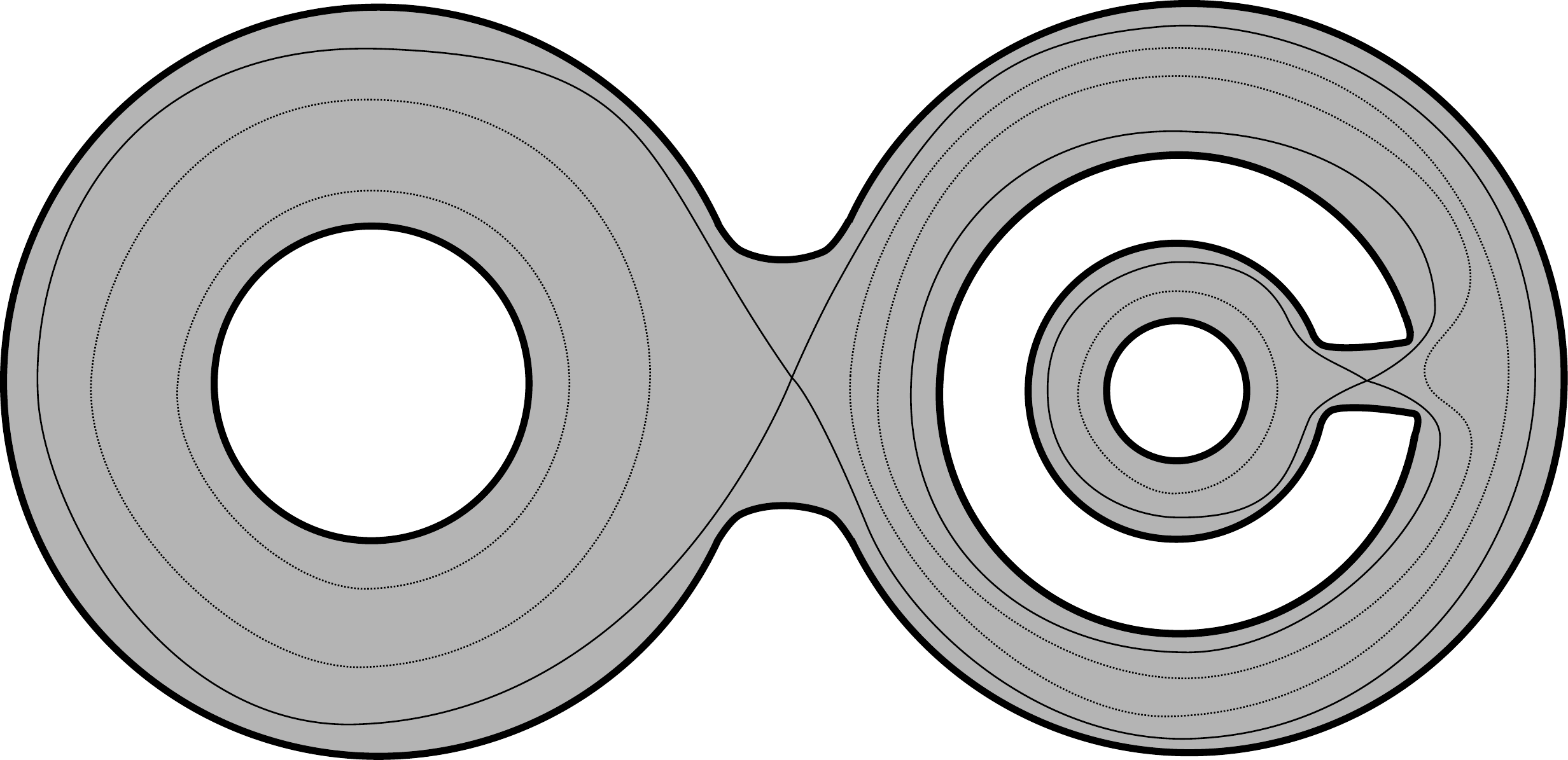%
}\endgroup
\caption{Level sets of a Hamiltonian generating $\psi_1\natural \psi_2 \natural \psi_2$}\label{fig:non-degenerate}
\end{figure}

\section{Appendix: connected sum and book-connected sum}
We will derive the first assertion of Lemma~\ref{lemma:connected_sum} from a sequence of more general statements.
Consider a Liouville cobordism $(W,\omega,X)$, so $\omega$ is a symplectic form, and $X$ is a globally defined Liouville vector field that is transverse to all boundary components.
Denote the {\bf convex boundary} of $W$ by
$$
\partial_+ W:= \{ x\in \partial W~|~ X(x) \text{ points outward} \}
.
$$
The convex boundary admits the contact form $\alpha_+:=i_X \omega |_{\partial_+ W}$.
Put $B:=\partial_+ W$, and consider an embedded isotropic sphere $S:S^{k-1} \to B$, so $S^*\alpha_+=0$.
We will impose the additional condition that the conformal symplectic normal bundle $CSN_B(S)$ is trivial.
Here is a brief, non-intrinsic description of this bundle. First of all, choose a complex structure $J$ that is compatible with $(\xi=\ker \alpha_+,d\alpha_+)$.
Denote the Reeb vector field of $\alpha_+$ by $R_+$.
Then we have
$$
T B|_{S} \cong \langle R_+ \rangle \oplus TS \oplus J TS \oplus CSN_B(S).
$$
Choose a trivialization $\vrepsilon: S\times (\R^{2n-2k},\omega_0)\to CSN_B(S)$.
As was first found by Weinstein, this data suffices for the attachment of {\bf symplectic handles}. We refer to the book of Geiges, \cite{G}, chapter 6, for a detailed description. 

Denote a $2n$-dimensional Weinstein $k$-handle by $H_k^{2n}$.
We write
$$
\tilde W|_{S,\vrepsilon}:=W \cup_{S,\vrepsilon} H^{2n}_k.
$$
We define contact surgery of $B$ along $(S,\vrepsilon)$ as the convex boundary $\partial_+ (\widetilde{W}|_{S,\vrepsilon})$. We will indicate handle attachment by $\widetilde{\phantom{~}}$ and surgery by $\overline{\phantom{~}}$, so
\begin{equation}
\label{eq:handle_attach_induces_surgery}
\partial_+ (\widetilde{W}|_{S,\vrepsilon})
=
\overline {\partial_+ W} |_{S,\vrepsilon}.
\end{equation}

\subsection{Relation with open books}
We will now describe the effect of some special subcritical contact surgeries on open books.
We start with an open book whose monodromy is trivial; for this, consider a stabilized Liouville domain $(W\times D^2,\omega_W+\omega_D,X_W+X_D)$. Here $\omega_W$ and $X_W$ stand for the symplectic form and Liouville vector field on $W$, respectively. The subscript $D$ is used for the corresponding objects on $D^2$. We will take $\omega_D=rdr\w d\phi$ and $X_D=\frac{1}{2}r\partial_r$ where $(r,\phi)$ are the usual polar coordinates.
We denote the Liouville forms by $\lambda_W=i_{X_W} \omega_W$ and $\lambda_D = i_{X_D} \omega_D$, respectively.

We obtain a natural open book on the convex boundary of $W\times D^2$ after first ``rounding''  its corners by the following procedure. 
Note that a neighborhood of the corner is given by $(-\varepsilon_W,0] \times \partial W \times (-\varepsilon_D,0] \times S^1$.
The neighborhoods $(-\varepsilon_W,0] \times \partial W$ in $W$ and $(-\varepsilon_D,0] \times S^1$ are taken so small such that $X_W$ and $X_D$ are outward pointing on each level set $\{t\} \times \partial W$ and $\{r\} \times S^1$, respectively.
We will parametrize the rounded corner explicitly:
choose real-valued functions $f,g$ defined on $[0,1]$ with the following properties
\begin{itemize}
\item $f$ satisfies $f|_{[0,1/4]}\equiv 0$, and is strictly decreasing on $(1/4,1]$ with $f(1)=-\varepsilon_W$.
\item $g$ is a strictly increasing function on $[0,3/4)$ with $g(0)=-\varepsilon_D$ and $g|_{[3/4,1]}\equiv 0$. 
\end{itemize}
Now put
\[
\psi:\partial W \times (0,1) \times \partial D^2 \longrightarrow
W \times D^2,
\quad
(w,t,z) \longmapsto
(Fl^{X_W}_{f(t)}(w),
Fl^{X_D}_{g(t)}(z)
)
\]
The Liouville form $\lambda_W+\lambda_D$ pulls back under $\psi$ to
\begin{equation}
\label{eq:form_rounded_corner}
e^{f(t)} i_W^* \lambda_W
+
e^{g(t)} i_D^* \lambda_D.
\end{equation}
Away from the rounded corner, the manifold $\partial_+ (W\times D^2)$ consists of the disjoint union of 
$$
(W \setminus (-\varepsilon_W,0] \times \partial W)\times S^1
\text{ and }
\partial W \times D^2_{<1-\varepsilon_D}.
$$
On the former set, the Liouville form restricts to the contact form $\lambda_W+d\phi$: we identify this set together with its contact form with the trivial mapping torus $M(W,\id)$ which is endowed with the same contact form.
On the set $\partial W \times D^2$, the Liouville form restricts to $\lambda_W|_{\partial W}+r^2 d\phi$. Together with the rounded corner, parametrized by $\psi$, we identify this contact manifold with a neighborhood of the binding which is equipped with a contact form that has the same form as defined in Equation~\eqref{eq:contact_form_binding} (use Equation~\eqref{eq:form_rounded_corner} to see this).

We hence get an abstract contact open book on the convex boundary. The choice of rounding data only affects the profile functions in Equation~\eqref{eq:contact_form_binding} and for any two profile functions that are admissible, a convex combination of them also is.
From now on, we will implicitly round corners and just write
\begin{equation}
\label{eq:open_book_boundary}
\partial_+(W\times D^2) \cong \OB(W,\id).
\end{equation}
In the following, we will decompose a Liouville domain as $W=W_c \cup W_m$, where $W_m \cong (-\varepsilon,0] \times \partial W$. We note that Equation~\eqref{eq:open_book_boundary} holds for $W_m$ as well, in which case we get an open book on a non-compact contact manifold.
We now make a sequence of simple observations:
\begin{enumerate}
\item suppose we are given $S:S^{k-1}\to \partial W$, an isotropic embedding into $\partial W$, together with a trivialization $\vrepsilon$ of its conformal symplectic normal bundle.
The embedding $S$ extends to an embedding into $\partial W\times D^2$, which we continue to denote by $S$. We also obtain a trivialization of the new larger conformal symplectic normal bundle by using $\partial_x$ and $\partial_y$, the tangent vectors to $\partial W \times \{0\}$ in the disk direction $D^2$, to trivialize the new directions in $CSN_{\partial_+ (W\times D^2)}(S)$. We denote this new trivialization by $\vrepsilon \oplus \vrepsilon_D$.

By choosing an appropriate (large) attaching region for the handle, we obtain a symplectomorphism, namely the identity,
\begin{equation}
\label{eq:fat_attachment}
\widetilde{W\times D^2}|_{S\times \id, \vrepsilon\oplus \vrepsilon_D}
\cong
\widetilde{W}|_{S,\vrepsilon} \times D^2.
\end{equation}

\item by applying in succession Equation~\eqref{eq:open_book_boundary}, observation (1), relating handle attachment to surgery as in Equation~\eqref{eq:handle_attach_induces_surgery}, and Equation~\eqref{eq:open_book_boundary} again, we find
$$
\OB(\widetilde{W},\id) \cong \partial_+(\widetilde{W}\times D^2)\cong \partial_+ (\widetilde{W\times D^2}) \cong 
\overline{ \partial_+ (W\times D^2)}
\cong \overline{\OB(W,\id)}
.
$$
\end{enumerate}
We now find (see also \cite{vK}, Proposition 4.2)
\begin{corollary}
\label{cor:subcritical_surgery_OB}
Suppose that $\OB(W,\psi)$ is a contact open book.
Suppose that $S:S^{k-1}\to B=\partial W$ is an isotropic embedding into the binding of the open book, and assume that we are given a trivialization of its conformal symplectic normal bundle of the form $\vrepsilon_B\oplus \vrepsilon_D$.
Then
\begin{equation}
\label{eq:surgery_OB}
\overline{\OB(W,\psi)}|_{S,\vrepsilon_B\oplus \vrepsilon_D} \cong 
\OB( \widetilde{W}|_{S,\vrepsilon_B},\tilde \psi),
\end{equation}
where $\tilde \psi$ equals $\psi$ on the complement of the attached $k$-handle and is the identity on the $k$-handle.
\end{corollary}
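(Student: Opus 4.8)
The plan is to reduce the statement to the special case $\psi=\id$, which is exactly the content of the chain of identifications $\OB(\widetilde{W},\id)\cong\overline{\OB(W,\id)}$ established in observation (2) above. The guiding principle is that both the contact surgery on the binding $B=\partial W$ and the corresponding handle attachment to the page are \emph{local} operations, supported in an arbitrarily small neighborhood of the isotropic sphere $S\subset B$; and near the binding the monodromy $\psi$ is, by the standing assumption, the identity. So nothing that happens during the surgery interacts with the twisting by $\psi$, which is concentrated away from the binding.

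First I would localize. Using the decomposition $W=W_c\cup W_m$ with $W_m\cong(-\varepsilon,0]\times\partial W$ a collar of the boundary, chosen small enough that $\psi|_{W_m}=\id$, and recalling that Equation~\eqref{eq:open_book_boundary} holds for $W_m$ as well, I may arrange the attaching region of the $k$-handle to lie entirely in the part of $\OB(W,\psi)$ coming from $W_m$. On this part the monodromy is trivial, so the relevant neighborhood of the binding in $\OB(W,\psi)$ is contactomorphic to the corresponding neighborhood in $\OB(W_m,\id)$.

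Next I would run the $\psi=\id$ argument on the local piece. Concretely, the chain
\[
\begin{aligned}
\overline{\OB(W_m,\id)}|_{S,\vrepsilon_B\oplus\vrepsilon_D}
&\cong\overline{\partial_+(W_m\times D^2)}|_{S,\vrepsilon_B\oplus\vrepsilon_D}
\cong\partial_+\bigl(\widetilde{W_m\times D^2}|_{S\times\id,\vrepsilon_B\oplus\vrepsilon_D}\bigr)\\
&\cong\partial_+\bigl(\widetilde{W_m}|_{S,\vrepsilon_B}\times D^2\bigr)
\cong\OB\bigl(\widetilde{W_m}|_{S,\vrepsilon_B},\id\bigr),
\end{aligned}
\]
obtained by successively applying Equation~\eqref{eq:open_book_boundary}, the relation between handle attachment and surgery in Equation~\eqref{eq:handle_attach_induces_surgery}, and the fat-attachment identity~\eqref{eq:fat_attachment} (with $\vrepsilon=\vrepsilon_B$), shows that the surgery replaces the collar $W_m$ by the enlarged Liouville domain $\widetilde{W_m}|_{S,\vrepsilon_B}$ while keeping the monodromy equal to the identity. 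Since every identification in this chain is supported near $S$ and can be taken to be the identity near the inner collar boundary $\{-\varepsilon\}\times\partial W$, it extends by the identity over the complementary region coming from $W_c$.

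Finally I would reassemble: gluing the attached handle back onto the unmodified piece $W_c$ produces the page $\widetilde{W}|_{S,\vrepsilon_B}$, and the monodromy becomes $\psi$ on $W_c$ — hence on the complement of the handle — and the identity on the handle, which is precisely the map $\tilde\psi$ in the statement. The hard part will be the bookkeeping in the localization step: one must check that the contactomorphisms produced by the $\psi=\id$ case are genuinely supported near $S$ and relative to the inner collar boundary, so that they patch with the nontrivial monodromy on $W_c$ and legitimately force $\tilde\psi$ to be the identity on the newly attached handle. This is routine once one notes that handle attachment and all the intervening identifications are local operations near $S\subset B$, but it is exactly the point that has to be verified with care.
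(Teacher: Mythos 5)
Your proposal is correct and follows essentially the same route as the paper: decompose $W=W_c\cup W_m$ with the monodromy trivial on the margin $W_m$, apply the $\psi=\id$ chain of identifications (observation (2)) to $\OB(W_m,\id)$ where the surgery is supported, and glue the result back to $M(W_c,\psi)$. Your closing remark about checking that the identifications are relative to the inner collar boundary is a fair point of care, but it is the same step the paper's proof relies on implicitly when it writes $\overline{\OB(W,\psi)}=M(W_c,\psi)\cup\overline{\OB(W_m,\id)}$.
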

\begin{proof}
Decompose the Liouville filling $W=W_c \cup W_m$, where $W_m$ is the ``margin'' of the page, i.e.~a neighborhood of the (convex) boundary, where the monodromy $\psi$ equals the identity. The Liouville domain $W_c$ is the content of the page, i.e.~the complement of the margin.
We have
$$
\OB(W,\psi)
=M(W_c,\psi) \cup M(W_m,\id) \cup B(W) 
=M(W_c,\psi) \cup \OB(W_m,\id)
$$  
We will now apply observation (2) to $\OB(W_m,\id)$.
As the surgery takes place in a neighborhood of the binding we have
$$
\overline{\OB(W,\psi)} =
M(W_c,\psi) \cup \overline{\OB(W_m,\id)}
\cong 
M(W_c,\psi) \cup
\OB(\widetilde{W_m},\id)
=\OB(\widetilde{W},\tilde \psi).
$$
This proves the claim.
\end{proof}

As a special case we obtain the following.
\begin{corollary}
There is a contactomorphism between the following contact manifolds, each presented as abstract open books,
\begin{equation}
\label{eq:open_books_are_contacto}
\OB(W_1,\psi_1)\# \OB(W_2,\psi_2) \cong
\OB(W_1 \natural W_2,\psi_1 \natural \psi_2).
\end{equation}
\end{corollary}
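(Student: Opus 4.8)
The plan is to reduce the statement to the already-established Corollary~\ref{cor:subcritical_surgery_OB} by recognizing the contact connected sum as a subcritical (index-one) Weinstein surgery. Recall that the contact connected sum $\OB(W_1,\psi_1)\#\OB(W_2,\psi_2)$ is, by definition, obtained from the disjoint union $\OB(W_1,\psi_1)\coprod\OB(W_2,\psi_2)=\OB(W_1\coprod W_2,\psi_1\coprod\psi_2)$ by attaching a Weinstein $1$-handle along an isotropic $S^0=\{p_1,p_2\}$ with one point in each summand; see Geiges~\cite{G}, chapter~6. Since the surgery statement \eqref{eq:surgery_OB} is phrased for a single open book, the first (essentially trivial) remark is that it applies verbatim to the disconnected open book $\OB(W_1\coprod W_2,\psi_1\coprod\psi_2)$, the surgery taking place at a single point of only one binding component in each summand.

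Next I would arrange the attaching data so as to match the hypotheses of Corollary~\ref{cor:subcritical_surgery_OB}. Because any two points of a connected contact manifold are interchanged by an ambient contactomorphism isotopic to the identity, and the binding is nonempty, I may isotope $S^0$ so that $p_i$ lies in the binding $\partial W_i$; a zero-dimensional sphere is automatically isotropic there. It then remains to choose the trivialization of its conformal symplectic normal bundle in the product form $\vrepsilon_B\oplus\vrepsilon_D$ demanded by the corollary. This is exactly the form furnished by the open book structure near the binding, with the $\vrepsilon_D$-summand spanned by the disk directions $\partial_x,\partial_y$ as in observation~(1) of the appendix. Moreover, since the symplectic group is connected the framing is immaterial up to contactomorphism, so restricting to this product form loses nothing.

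With the data in place, applying Corollary~\ref{cor:subcritical_surgery_OB} with $W=W_1\coprod W_2$, $\psi=\psi_1\coprod\psi_2$, and $S=S^0$ gives
$$
\OB(W_1,\psi_1)\#\OB(W_2,\psi_2)\cong
\OB\bigl(\widetilde{W_1\coprod W_2}\,|_{S^0,\vrepsilon_B},\,\tilde\psi\bigr).
$$
The final step is an identification at the level of pages: attaching a Weinstein $1$-handle to $W_1\coprod W_2$ along $\{p_1,p_2\}\subset\partial W_1\coprod\partial W_2$ is by definition the boundary connected sum $W_1\natural W_2$, and the induced monodromy $\tilde\psi$---equal to $\psi_i$ off the handle and to the identity on it---is precisely $\psi_1\natural\psi_2$ as defined in Section~\ref{sec:abstract_book_sum}. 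Substituting these identifications yields the claimed contactomorphism~\eqref{eq:open_books_are_contacto}.

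I expect the main obstacle to be the second step: pinning down that the abstract contact connected sum really is \emph{this} particular surgery---performed at a point of the binding and with the product framing---rather than some other index-one surgery. The argument rests on two standard facts, namely that the connected sum is independent of the chosen Darboux balls (so that $S^0$ may be pushed into the binding) and that the symplectic framing is unique up to homotopy. Once these are granted, the identification with \eqref{eq:surgery_OB} is forced, and everything that follows is bookkeeping.
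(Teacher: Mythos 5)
Your proposal is correct and follows essentially the same route as the paper: the paper likewise deduces the corollary by taking $k=1$ in Corollary~\ref{cor:subcritical_surgery_OB} with $W=W_1\coprod W_2$ and $S:S^0\to\partial(W_1\coprod W_2)$ mapping one point into a boundary component of each summand, noting that the trivialization of the conformal symplectic normal bundle is unique up to homotopy, and then identifying the left-hand side of \eqref{eq:surgery_OB} with the contact connected sum and the right-hand side with the abstract book-connected sum. Your extra remarks on pushing $S^0$ into the binding and on the independence of the choice of Darboux balls are consistent with the paper's setup, where the connected sum is by construction performed along Darboux balls centered at points of the binding.
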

Indeed, for this, we take $k=1$ in Corollary~\ref{cor:subcritical_surgery_OB}, put $W=W_1\coprod W_2$, and choose $S:S^0 \to \partial (W_1\coprod W_2)$ mapping one point of $S^0$ to one boundary component of $W_1$ and the other point of $S^0$ to a boundary component of $W_2$; there is only one homotopy class of trivializations of the conformal symplectic normal bundle in this case.
The left-hand side in \eqref{eq:surgery_OB} then reduces to $0$-surgery along $S$, which is the contact connected sum of the open books.
The right-hand side is a contact open book whose page is obtained by attaching a symplectic $1$-handle to $W=W_1\coprod W_2$: this is the new page $\tilde W$. The monodromy extends as the identity on the symplectic $1$-handle. In other words, the right-hand side is the abstract book-connected sum.

\end{document}